\providecommand{\keywords}[1]
{
	\small	
	\textbf{\textit{Keywords:}} #1
}
\providecommand{\subjclass}[1]
{
	\small	
	\textbf{\textit{2020 Mathematics Subject Classification:}} #1
}
\DeclareMathOperator{\R}{\mathbb{R}}
\renewcommand{\P}{\mathbb{P}}
\newcommand{\N}{\mathbb{N}}
\newcommand{\uteta}{{\underline{\Theta}}}
\renewcommand{\phi}{\varphi}
\numberwithin{equation}{section}
\theoremstyle{plain}
\newtheorem{theorem}{Theorem}[section]
\newtheorem{corollary}[theorem]{Corollary}
\newtheorem{proposition}[theorem]{Proposition}
\newtheorem{lemma}[theorem]{Lemma}
\theoremstyle{remark}
\theoremstyle{definition}
\begin{document}

\title{Smoothness of random self-similar measures on the line and the existence of interior points}

\author[1]{Bal\'azs B\'ar\'any\thanks{B. B\'ar\'any was supported by the grants National Research, Development and Innovation Fund K142169 and KKP144059 “Fractal geometry and applications”.}}
\author[2,3]{Micha\l\ Rams\thanks{M. Rams was partially supported by National Science Centre
grant 2019/33/B/ST1/00275 (Poland) and by the Erd\H{o}s Center.\\ \indent The project was part of the semester program "Fractals and Hyperbolic Dynamical Systems" organized and funded by the Erd\H{o}s Center.}}
\affil[1]{Department of Stochastics, Institute of Mathematics, Budapest University of Technology and Economics, Műegyetem rpk. 1-3., Budapest, Hungary, H-1111}
\affil[ ]{email: barany.balazs@ttk.bme.hu}
\affil[2]{Institute of Mathematics, Polish Academy of Sciences, ul. \'Sniadeckich 8, 00-656 Warszawa, Poland}
\affil[3]{HUN-REN Alfr\'ed R\'enyi Institute of Mathematics, Re\'altanoda u. 13–15, H-1053 Budapest, Hungary}
\affil[ ]{email: rams@impan.pl}

\maketitle

\begin{abstract}
In this paper, we study the smoothness of the density function of absolutely continuous measures supported on random self-similar sets on the line. We show that the natural projection of a measure with symbolic local dimension greater than 1 at every point is absolutely continuous with H\"older continuous density almost surely. In particular, if the similarity dimension is greater than $1$ then the random self-similar set on the line contains an interior point almost surely.
\end{abstract}

\keywords{random fractals, statistically self-similar sets, absolute continuity, interior point}

\subjclass{Primary 28A80 Secondary 60G30 60G57}

\section{Introduction}

Let $\Phi=\{f_i\}_{i=1}^N$ be an iterated function system on the line, given by the maps $f_i(x) = \lambda_i x + t_i$. We denote by $\Lambda$ the attractor of the IFS, that is the unique nonempty compact set satisfying the equation
\[
\Lambda = \bigcup_{i=1}^N f_i(\Lambda),
\]
see Hutchinson~\cite{Hutchinson}.
The similarity dimension of the IFS is given by the solution of the equation
\begin{equation}\label{eq:simdim}
\sum_{i=1}^N \lambda_i^s=1.
\end{equation}
There is a natural way of presenting $\Lambda$ using the symbolic space $\Omega = \{1,\ldots,N\}^\N$. Namely, $\Lambda = \Pi(\Omega)$, where
\[
\Pi(\omega) =\lim_{n\to\infty}f_{\omega_1}\circ\cdots\circ f_{\omega_n}(0)=\sum_{j=0}^\infty \prod_{i=0}^{j-1} \lambda_{\omega_i} \cdot t_{\omega_j}.
\]
We will also denote
\[
\lambda_{\omega^j} = \prod_{i=0}^{j-1} \lambda_{\omega_i}
\]
for all words $\omega^j=(\omega_0, \ldots, \omega_{j-1})$ of length $j$. Using the contraction ratios of the IFS, one can define a natural metric on $\Omega$, namely,
$$
d(\omega,\tau)=\lambda_{\omega^{|\omega\wedge\tau|}},
$$
where $|\omega\wedge\tau|=\min\{n\geq0:\omega_{n+1}\neq\tau_{n+1}\}$. Let $\sigma$ denote the usual left-shift operator on $\Omega$. For finite words $\omega\in\Omega^*=\bigcup_{n=0}^\infty\{1,\ldots,N\}^n$, denote $|\omega|$ the length of $\omega$, furthermore, denote $[\omega]$ the corresponding cylinder set, that is,
$$
[\omega]=\{\tau\in\Omega:\omega_n=\tau_n\text{ for every }n\leq|\omega|\}.
$$

For the probability vector $(p_1,\ldots,p_N)$ and the corresponding Bernoulli measure $\mu_0$ on $\Omega$, let $\nu=\Pi_* \mu_0$ be the unique compactly supported probability measure such that
\[
\nu=\sum_{i=1}^Np_i(f_i)_*\nu
\]
called self-similar measure. With the choice of the probabilities $(\lambda_1^s,\ldots,\lambda_N^s)$, we call the measure $\mu$ the {\it natural measure}. Similarly, for any measure $\mu$ on $\Omega$ we can define its image $\nu=\Pi_* \mu$ on the real line.

Throughout the paper, we will denote by $\dim_H$ the Hausdorff dimension of sets and measures, for precise definition and basic properties, see for example \cite{BSS}. Furthermore, let us denote by $\mathcal{L}$ the Lebesgue measure on the line.

In the recent years, considerable attention has been paid for the dimension theory and geometric properties of self-similar sets and measures. {The combination of the results of Hutchinson \cite{Hutchinson} and Cawley and Mauldin \cite{CawleyMauldin} gives us the following:} if the IFS $\Phi$ satisfies the open set condition then
$$
\dim_H(\Lambda)=s\text{ and }\dim_H(\nu)=\frac{-\sum_{i=1}^Np_i\log p_i}{-\sum_{i=1}^Np_i\log|\lambda_i|},
$$
where $s$ is the similarity dimension. Hochman \cite{Hochman} generalised this result significantly for the overlapping situation and showed that if the exponential separation condition holds then
\begin{equation}\label{eq:typdim}
\dim_H(\Lambda)=\min\{1,s\}\text{ and }\dim_H(\nu)=\min\left\{1,\frac{-\sum_{i=1}^Np_i\log p_i}{-\sum_{i=1}^Np_i\log|\lambda_i|}\right\}.
\end{equation}
In particular, the exponential separation condition holds up to an $N-1$-dimensional family of translation parameters $(t_1,\ldots,t_N)$ if $\max_{i\neq j}\{|\lambda_i|+|\lambda_j|\}<1$, see Fraser and Shmerkin \cite{FraserShmerkin}. Jordan and Rapaport \cite{JordanRapaport} generalized \eqref{eq:typdim} for left-shift invariant ergodic measures, and Shmerkin \cite{Shmerkinlq} studied the $L^q$-dimension of self-similar measures under exponential separation.

It is a natural question to ask whether $s>1$ implies $\mathcal{L}(\Lambda)>0$, or if $\frac{-\sum_{i=1}^Np_i\log p_i}{-\sum_{i=1}^Np_i\log|\lambda_i|}>1$ implies $\mu\ll\mathcal{L}$, at least for typical choice of the natural parameters? This was verified by Saglietti, Shmerkin and Solomyak \cite{SSS}, namely, for every translation vector $(t_1,\ldots,t_N)$ such that $t_i\neq t_j$ and for Lebesgue-almost every contraction $(\lambda_1,\ldots,\lambda_N)$ with $\frac{-\sum_{i=1}^Np_i\log p_i}{-\sum_{i=1}^Np_i\log|\lambda_i|}>1$, the measure $\mu$ is absolutely continuous with respect to the Lebesgue measure. In particular, if $s>1$ then $\mathcal{L}(\Lambda)>0$ for typical choice of translation vectors.

Another natural question to ask whether $\mathcal{L}(\Lambda)>0$ then does $\Lambda$ have a non-empty interior? The answer for this question is negative on $\R^d$ for $d\geq2$, see Cs\"ornyei, Jordan, Pollicott, Preiss and Solomyak \cite{CsJPPS}, however, it is still open in $\R$. It is an open question even for typical choice of the parameters. {In the special case of the $\{0,1,3\}$-problem, Keane, Smorodinsky and Solomyak \cite{KeaneSmorodinskySolomyak} studied the problem of the existence of interior point, proving that there indeed exists certain $s_0>1$ such that if $s>s_0$ then the attractor has an internal point.}

{In this paper, we intend to study similar questions for randomly perturbed self-similar systems. Assume our base map is an iterated function system as above, and} let $\Theta$ be a uniformly bounded real-valued random variable on the real line, which will be our perturbation of the system. We will assume that $\Theta$ has a compactly supported absolutely continuous distribution and that the Fourier transform of its distribution (denoted by $\widehat{\Theta}$ with a slight abuse of notation) satisfies
\begin{equation}\label{eq:assfourier}
|\widehat{\Theta}(x)| \leq \frac C {(1+|x|)^M}
\end{equation}
for some constant $C>0$ and $M>0$ sufficiently large, depending on the deterministic IFS.

For every $\omega\in\Omega^*=\bigcup_{n=0}^\infty\{1,\ldots,N\}^n$, let $\Theta_{\omega}$ be independent copies of $\Theta$. Write $\uteta$ for the whole collection $\{\Theta_\emptyset, \Theta_1, \ldots, \Theta_N, \Theta_{11},\ldots\}$, and we equip the set $\mathbb{T}$ of all such sequences with the usual product topology. We will write $\P$ for the probability distribution on $\R^{\Omega_*}$ given by independent probability distribution of $\Theta$ on each vortex.

Let us consider the random set $\Lambda_\uteta=\Pi_\uteta(\Omega)$, given by
\[
\Pi_\uteta(\omega) = \sum_{j=0}^\infty \lambda_{\omega^j} \cdot (t_{\omega_j}+\Theta_{\omega^j}).
\]
For a measure $\mu$ on $\Omega$, denote also
\[
\nu_\uteta = (\Pi_\uteta)_*(\mu).
\]
In heuristic terms, we apply the perturbation independently at every vortex of the graph $\Omega_*$ consisting of the finite sequences of symbols $\{1,\ldots,N\}$.

This class of randomly perturbed systems has been already studied in the recent years. Jordan, Pollicott and Simon \cite{JPS07} studied the almost sure value of the dimension and Lebesgue measure of $\Lambda_{\underline{\Theta}}$, and the almost sure value of the dimension and the absolute continuity of $\nu_{\uteta}$ for ergodic left-shift invariant measures $\mu$. Dekking, Simon, Sz\'ekely and Szekeres \cite{DSSS} recently showed that if $s>1$ then $\Lambda_{\uteta}$ contains and interior point almost surely. Even more recently, Gu and Miao \cite{GuMiao} studied the $L^q$ dimension of randomly perturbed systems.

There are several other types of natural random perturbations of self-similar sets, like perturbing the contraction ratios, see for example Koivusalo \cite{Koivusalo} and Peres, Simon and Solomyak \cite{PSS}, which we do not discuss in this paper.

Let us now turn to our main result. We will in general assume that there exist $K>0$ and $s'>1$ such that
\begin{equation}\label{eq:assume}
	\mu([\omega]) \leq K \lambda_{\omega}^{s'}\text{ for every finite word $\omega\in\Omega^*$},
\end{equation}
that is the local dimension of $\mu$ is strictly greater than 1 everywhere in $\Omega$. We note that necessarily $s'\leq s$, where $s$ is the similarity dimension.

\begin{theorem}\label{thm:main}
	Let $\Phi$ be a self-similar IFS on the line such that $s>1$ defined in \eqref{eq:simdim}. Let $\mu$ be a measure on $\Omega$ such that \eqref{eq:assume} holds with some $s'>1$. Moreover, let $\Theta$ be a random variable such that \eqref{eq:assfourier} holds with $M\geq s'$. Then $\nu_\uteta$ is absolutely continuous with respect to the Lebesgue measure on the line and has H\"older continuous density almost surely.
\end{theorem}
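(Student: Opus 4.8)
The plan is to prove Hölder continuity of the density by controlling moments of the Fourier transform $\widehat{\nu_\uteta}$ and invoking a standard Sobolev-type criterion: if $\int |\widehat{\nu_\uteta}(\xi)|^2 (1+|\xi|)^{2\alpha}\dd\xi < \infty$ for some $\alpha>0$, then $\nu_\uteta$ has a density in a Hölder class (more precisely, one gets $C^\beta$ for $\beta<\alpha-1/2$ once $\alpha>1/2$). So it suffices to show that $\E\big[\int |\widehat{\nu_\uteta}(\xi)|^2(1+|\xi|)^{2\alpha}\dd\xi\big]<\infty$ for a suitable $\alpha>1/2$; then the integral is almost surely finite and we are done. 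The heart of the matter is therefore a pointwise bound on $\E|\widehat{\nu_\uteta}(\xi)|^2$ (or on $\E|\widehat{\nu_\uteta}(\xi)|$) decaying like $|\xi|^{-\rho}$ for some $\rho>1$.

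To get that decay I would exploit the recursive self-similar structure of $\nu_\uteta$. Writing $\Pi_\uteta(\omega) = \Theta_\emptyset + \lambda_{\omega_0}(t_{\omega_0} + \Pi_{\sigma\uteta^{(\omega_0)}}(\sigma\omega))$ and conditioning on the first symbol, one gets
\[
\widehat{\nu_\uteta}(\xi) = \widehat{\Theta_\emptyset}(\xi)\sum_{i=1}^N \mu([i])\, e^{2\pi \iii \xi \lambda_i t_i}\, \widehat{\nu_{\uteta^{(i)}}}(\lambda_i \xi),
\]
where the $\uteta^{(i)}$ are the (independent) sub-collections rooted at $i$. Iterating $n$ times expresses $\widehat{\nu_\uteta}(\xi)$ as a sum over length-$n$ words $\omega$ of a product $\prod_{j<n}\widehat{\Theta_{\omega^j}}(\lambda_{\omega^j}\xi)$ times $\mu([\omega])$ times a unimodular phase times $\widehat{\nu_{\uteta^{(\omega)}}}(\lambda_\omega\xi)$. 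Taking expectations, the independence of the $\Theta_{\omega^j}$ across distinct vortices factorizes the expectation of the product; after squaring and expanding $|\cdot|^2$ as a double sum over words $\omega,\omega'$, the cross terms factor as soon as $\omega$ and $\omega'$ branch, leaving a main ``diagonal'' contribution $\sum_{|\omega|=n}\mu([\omega])^2 \prod_{j<n}\E|\widehat{\Theta}(\lambda_{\omega^j}\xi)|^2$ plus lower-order off-diagonal terms. Using \eqref{eq:assume} in the form $\mu([\omega])\le K\lambda_\omega^{s'}$ and the Fourier decay \eqref{eq:assfourier} with $M\ge s'$, one sees that along a word the factors $\E|\widehat{\Theta}(\lambda_{\omega^j}\xi)|^2$ are genuinely small exactly for those prefixes with $\lambda_{\omega^j}|\xi|\gtrsim 1$; choosing the stopping level $n=n(\xi)$ so that $\lambda_\omega|\xi|\asymp 1$ for words surviving to level $n$, the product of the $\Theta$-factors over the ``large-frequency'' portion of the word, combined with the $\mu([\omega])^2$ weight and the subadditivity $\sum_i\lambda_i^{s'}\le 1$, yields $\E|\widehat{\nu_\uteta}(\xi)|^2 \le C (1+|\xi|)^{-\rho}$ with $\rho$ essentially governed by $\min(2M, 2s'-1)>1$; any exponent $\rho>1$ suffices, and an honest bookkeeping should give room to spare.

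The main obstacle — and where the real work lies — is the combinatorial control of the off-diagonal terms in the expansion of $\E|\widehat{\nu_\uteta}(\xi)|^2$: when two words $\omega,\omega'$ agree on a long common prefix $\eta$ and then branch, the shared vortices $\Theta_{\eta^j}$ contribute $\E\big[\widehat{\Theta}(\lambda_{\eta^j}\xi)\overline{\widehat{\Theta}(\lambda_{\eta^j}\xi)}\big]=\E|\widehat{\Theta}(\lambda_{\eta^j}\xi)|^2$ just like the diagonal, while the post-branching parts decouple and each contributes a factor $|\E\widehat{\Theta}(\cdot)|=|\widehat{\Theta}(\cdot)|$; one must check that summing over all branch points and all ways of completing the two words, weighted by $\mu([\omega])\mu([\omega'])$, does not destroy the decay. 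Organizing this as a sum over the common prefix $\eta$ and recognizing that, past the branch, one is essentially looking at $\E|\widehat{\nu_{\uteta'}}(\lambda_\eta\xi)|$ for two independent copies — so the off-diagonal sum is controlled by (diagonal bound at frequency $\lambda_\eta\xi$) times $(\E|\widehat{\nu}(\lambda_\eta\xi)|)^2$ — closes the recursion, provided one first establishes the weaker first-moment decay $\E|\widehat{\nu_\uteta}(\xi)|\le C(1+|\xi|)^{-\rho/2}$ by the same prefix-stopping argument. I would therefore prove the first-moment bound first, by a single pass through the word conditioning and the stopping-time choice $\lambda_\omega|\xi|\asymp 1$, and then feed it into the second-moment estimate to absorb the off-diagonal contributions. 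Once $\E\int|\widehat{\nu_\uteta}(\xi)|^2(1+|\xi|)^{2\alpha}\dd\xi<\infty$ for some $\alpha>1/2$ is in hand, absolute continuity with Hölder density is immediate from the classical embedding, and the statement about interior points in $\Lambda_\uteta$ follows since the support of a nontrivial measure with continuous density has nonempty interior.
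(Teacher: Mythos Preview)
Your Sobolev-embedding strategy has a genuine gap: the second-moment decay of $\widehat{\nu_\uteta}$ is not strong enough to deliver H\"older continuity in the full range $s'>1$.

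Here is the obstruction. In the expansion of $\mathbb{E}|\widehat{\nu_\uteta}(\xi)|^2=\iint\mathbb{E}\bigl[e^{i\xi(\Pi_\uteta(\omega)-\Pi_\uteta(\tau))}\bigr]\dd\mu(\omega)\dd\mu(\tau)$, the contributions of the random translations $\Theta_{\eta^j}$ along the common prefix $\eta=\omega\wedge\tau$ \emph{cancel}: one has $e^{i\xi\lambda_{\eta^j}\Theta_{\eta^j}}\cdot\overline{e^{i\xi\lambda_{\eta^j}\Theta_{\eta^j}}}=1$, not $|\widehat\Theta(\lambda_{\eta^j}\xi)|^2$ as you write. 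Decay from $\widehat\Theta$ enters only \emph{after} the branch, at scale $\lambda_\eta$. Keeping the first post-branch factor from each word gives $|\mathbb{E}[\cdots]|\le C(1+\lambda_\eta|\xi|)^{-2M}$; keeping more factors only improves the exponent in $(1+\lambda_\eta|\xi|)$, which does not help. Since $\mu\times\mu(\lambda_{\omega\wedge\tau}\le r)\le K r^{s'}$ by \eqref{eq:assume}, integrating against this tail yields
\[
\mathbb{E}|\widehat{\nu_\uteta}(\xi)|^2 \;\lesssim\; \int_0^1 \frac{s' r^{s'-1}}{(1+r|\xi|)^{2M}}\dd r \;\asymp\; |\xi|^{-s'},
\]
and this is sharp: the event $\lambda_{\omega\wedge\tau}\lesssim|\xi|^{-1}$ contributes of order $|\xi|^{-s'}$ with essentially no damping, so $\mathbb{E}|\widehat{\nu_\uteta}(\xi)|^2\asymp|\xi|^{-s'}$. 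For the Sobolev route you need $\int(1+|\xi|)^{2\alpha}\mathbb{E}|\widehat{\nu_\uteta}(\xi)|^2\dd\xi<\infty$ with $\alpha>1/2$, i.e.\ decay exponent strictly larger than $2$. Hence your argument only goes through when $s'>2$; for $1<s'\le 2$ it fails, and no refinement of the off-diagonal bookkeeping can rescue it because the bottleneck is the diagonal (shared-prefix) contribution where the randomness is invisible.

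The paper's proof is designed precisely to bypass this barrier. Instead of second moments it estimates $\mathbb{E}\bigl[(\vartheta_\uteta(a)-\vartheta_\uteta(b))^p\bigr]$ for a large \emph{even} $p$, via Carleson's theorem and Fatou, and then applies Kolmogorov's continuity theorem. The point is that with $p$ words $\omega^{(1)},\ldots,\omega^{(p)}$ one obtains $p-1$ independent branching scales $N_2,\ldots,N_p$, hence $p$ independent $\widehat\Theta$-factors whose arguments form a nondegenerate linear change of variables in $(\xi_1,\ldots,\xi_p)$; after integrating, the cost is $\prod_{k=2}^p\lambda_{\omega^{(k),N_k}}^{-1-\gamma}$, and \eqref{eq:assume} makes this summable provided $1+\gamma<s'$. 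One then picks any $0<\gamma<s'-1$ and $p$ even with $p\gamma>1$, so Kolmogorov applies for every $s'>1$. In short, the second moment sees only one branching and is stuck at exponent $s'$; going to high moments buys many branchings and lets a tiny per-branch gain $\gamma$ accumulate past the threshold.
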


An easy corollary of Theorem~\ref{thm:main} is the following:

\begin{corollary}\label{cor}
	Let $\Phi$ be a self-similar IFS on the line such that $s>1$ and let $\Theta$ be a random variable such that \eqref{eq:assfourier} holds with $M\geq s$. Then $\Lambda_\uteta$ contains an open interval almost surely.
\end{corollary}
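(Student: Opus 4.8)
The plan is to deduce Corollary~\ref{cor} directly from Theorem~\ref{thm:main} applied to a carefully chosen measure on $\Omega$. First I would take $\mu=\mu_0$ to be the natural measure, i.e. the Bernoulli measure associated with the probability vector $(\lambda_1^s,\ldots,\lambda_N^s)$, which is a genuine probability vector by \eqref{eq:simdim}. For this choice $\mu([\omega])=\prod_{i}\lambda_{\omega_i}^s=\lambda_\omega^s$, so assumption \eqref{eq:assume} holds with $K=1$ and $s'=s>1$. Since the corollary assumes \eqref{eq:assfourier} with $M\geq s=s'$, all hypotheses of Theorem~\ref{thm:main} are met, and we conclude that almost surely $\nu_\uteta=(\Pi_\uteta)_*\mu$ is absolutely continuous with a H\"older continuous — in particular continuous — density $g_\uteta$ with respect to $\LL$.

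Next I would convert the smoothness of $\nu_\uteta$ into an interval contained in its support. As $g_\uteta$ is the density of a probability measure it is not identically zero, so there is a point $x_0$ with $g_\uteta(x_0)>0$; by continuity $g_\uteta>0$ on some open interval $I\ni x_0$. Then $\nu_\uteta(J)=\int_J g_\uteta\,\dd\LL>0$ for every nonempty open subinterval $J\subseteq I$, which forces $I\subseteq\operatorname{supp}(\nu_\uteta)$.

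Finally I would identify the support with $\Lambda_\uteta$. The set $\Lambda_\uteta=\Pi_\uteta(\Omega)$ is the continuous image of the compact space $\Omega$, hence compact, and $\nu_\uteta(\R\setminus\Lambda_\uteta)=\mu(\Pi_\uteta^{-1}(\R\setminus\Lambda_\uteta))=\mu(\emptyset)=0$, so $\operatorname{supp}(\nu_\uteta)\subseteq\Lambda_\uteta$ (and in fact equality holds, since the natural measure has full support on $\Omega$). Combining this with the previous step gives $I\subseteq\Lambda_\uteta$ almost surely, i.e. $\Lambda_\uteta$ contains an open interval almost surely, as claimed. I do not expect any real obstacle here: the entire content sits in Theorem~\ref{thm:main}, and the only point requiring care is that the natural measure realises exponent $s'=s$ in \eqref{eq:assume}, which is precisely why the hypothesis $M\geq s$ appears in the corollary.
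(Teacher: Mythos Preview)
Your proof is correct and follows essentially the same route as the paper: apply Theorem~\ref{thm:main} to the natural Bernoulli measure with weights $(\lambda_1^s,\ldots,\lambda_N^s)$, for which \eqref{eq:assume} holds with $s'=s$, and then use continuity of the resulting density to find an interval in $\operatorname{supp}(\nu_\uteta)\subseteq\Lambda_\uteta$. Your write-up is in fact a bit more explicit than the paper's about why $\operatorname{supp}(\nu_\uteta)\subseteq\Lambda_\uteta$, but the argument is the same.
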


First, let us discuss the assumptions of Theorem~\ref{thm:main}. Assumption $s>1$ is a natural assumption, since $s$ is a natural upper bound on the dimension of $\Lambda_\uteta$ for {\bf every} realisation of $\uteta$, thus, if $s<1$ then $\Lambda_\uteta$ has zero Lebesgue measure for every $\uteta$. Also, condition \eqref{eq:assume} is natural and close to optimal since if there exist $\omega\in\Omega$ and $s'<1$ such that $\mu([\omega^n])>\lambda_{\omega^n}^{s'}$ for infinitely many $n$ then
$$
\limsup_{n\to\infty}\frac{\nu_{\uteta}(B(\Pi_\uteta(\omega),\lambda_{\omega^n}))}{2\lambda_{\omega^n}}\geq\limsup_{n\to\infty}\frac{\mu([\omega^n])}{2\lambda_{\omega^n}}\geq\limsup_{n\to\infty}2^{-1}\lambda_{\omega^n}^{s'-1}=\infty,
$$
which means that $\mu$ cannot have bounded density. Moreover, \eqref{eq:assume} is strongly related to the lower-$L^q$ dimension of $\mu$. We define the lower-$L^q$ dimension $D(\mu,q)$ of $\mu$ for $q>1$ as the unique solution $D$ of the equation
$$
\liminf_{n\to\infty}\frac{-1}{n}\log\sum_{\omega\in\Omega_n}\mu([\omega])^q\lambda_{\omega}^{D(1-q)}=0.
$$
It is easy to see that \eqref{eq:assume} holds if and only if $\liminf_{q\to\infty}D(\mu,q)>1$. In particular, \eqref{eq:assume} implies that the lower $L^q$-dimension of $\nu_\uteta$ is $1$ for every $q>1$ almost surely, see Gu and Miao \cite[Theorem~2.9]{GuMiao}, which is necessary for the existence of continuous (bounded) density function.

Our only technical-like assumption is \eqref{eq:assfourier} on the distribution of random variables. The random perturbations are usually considered to be relatively smooth absolutely continuous random variables, so one may expect some kind of Fourier-decay of their distribution. {Nevertheless, the most natural perturbation of all -- with $\Theta$ given as a uniformly distributed measure on an interval, this is the class of systems investigated in \cite{DSSS} -- does not satisfy this assumption.}

In a sense, our main result can be considered as an extension of the recent results of Dekking, Simon, Székely and Szekeres \cite{DSSS} and Gu and Miao \cite{GuMiao}. The existence of the interior point is only a simple corollary of our main result, which was the main theorem in \cite{DSSS}, and our proof uses completely different methods. We rely on the Fourier transform of the density function and Kolmogorov's continuity theorem. On the other hand, we complete further smoothness properties of the projected measure rather than only the $L^q$-dimension.

Our methods borrows several ideas from the proof of Erraoui and Hakiki \cite[Theorem~3.5]{ErraouiHakiki}. We treat the local density of the random fractal measure at a given point $x$ as a random variable forming a part of a stochastic process, with $x$ playing the role of time. To do this we must first know that the random measure is almost surely absolutely continuous (hence the local density indeed exists almost everywhere, almost surely, and the stochastic process is well defined). We then estimate the increments of this process using the inverse Fourier transform and apply the Kolmogorov's Continuity Theorem. Unfortunately, this step where we use the inverse Fourier transform is only doable in dimension 1, as we are relying on the Carleson's Theorem which is not true in higher dimensions.

%Note that if we add a constant translation to $\Theta$, $\nu_\uteta$ and $\Lambda_\uteta$ will also be simply translated. Thus, it is actually enough to assume in Theorem \ref{thm:main} that $\Theta$ has some translation with the required speed of decay of its Fourier transform.

%Sadly, for $d=2$ there exists a Borel set $A\subset\R^2$ of full Lebesgue measure such that $(\proj_e(A))^o=\emptyset$ for every $e\in\mathbb{S}^1$, see Mattila's book \cite[Exercise~4 in page~264]{Mattila_geometry}. We note that every Borel set is Souslin, see for example Kechris' book \cite[Theorem~14.2]{Kechris}.

%\begin{theorem}\label{thm:selfsimilar}
%  Let $\{f_i(x)=\lambda_ix+t_i\}_{i=1}^N$ be a self-similar IFS on $\R$ such that $\lambda_i\neq0$. Then for Lebesgue almost every $(t_1,\ldots,t_m)$ the attractor $X$ has non-empty interior.
%\end{theorem}

%Our method is in dimension one because Carleson's theorem works only on the line. (Otherwise, it would work...)

%Remember to Erraoui and Hakiki whose proof we don't understand.

%Thanks to Richárd Balka, to direct our attention to that ununderstandable paper.

%Everybody is very grateful to Erd\H{o}s Center.

\

\section{Main tools}

%Here we enumerate the theorems we use to prove Theorem~\ref{thm:main}.

%\begin{theorem}[Marstrand's Projection Theorem]\label{thm:Marstrand}
%	Let $\nu$ be a Borel probability measure on $\R^d$ such that there exist $b>0$ and $s>1$ such that $\nu(B(x,r))\leq br^s$ for every $x\in\R^d$. Then for Lebesgue almost every $t\in\R^d$, $\nu_t\ll\mathcal{L}$ with $L^2$ density.
%\end{theorem}

%The proof of this theorem can be found in \cite[Theorem~2.3.2(ii)]{BSSbook}. In particular, a simple consequence of Theorem~\ref{thm:Marstrand} is that the density function $\vartheta_t$ of $\nu_t$ is an $L^2$ function, moreover, the limit
%\begin{equation}\label{eq:density}
%	\vartheta_t(x)=\lim_{r\to0}\frac{\nu_t(B(x,r))}{2r}
%\end{equation}
%exists and finite for Lebesgue and $\nu_t$-almost every $x$, see Mattila's book \cite[Theorem~2.12]{Mattila_geometry}.

Now, let us introduce the notation of the Fourier transform. If $\nu$ is a Borel probability measure on $\R$ and $f\colon\R\to\R$ an $L^1$-function then the Fourier transform of $\nu$ and $f$ is
$$
\hat{\nu}(\xi)=\int e^{ix\xi}d\nu(x)\text{ and }\hat{f}(\xi)=\int e^{ix\xi}f(x)dx.
$$
If $\nu$ is absolutely continuous with density function $f$ then $\hat{\nu}=\hat{f}$. One of our main tools is Carleson's Theorem.

\begin{theorem}[Carleson]\label{thm:carleson}
	Let $f$ be an $L^2$ function on $\R$ with respect to the Lebesgue measure. Then
	$$
	f(x)=\lim_{n\to\infty}\frac{1}{2\pi}\int_{-n}^n e^{-ix\xi}\hat{f}(\xi)d\xi\text{ for Lebesgue-almost every }x.
	$$
\end{theorem}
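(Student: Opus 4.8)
The plan is to deduce the stated pointwise convergence from a maximal inequality for the \emph{Carleson operator}, following the standard scheme for almost-everywhere convergence theorems. For $f\in L^2(\R)$ define
$$
Cf(x)=\sup_{n\in\N}\left|\frac{1}{2\pi}\int_{-n}^n e^{-ix\xi}\hat f(\xi)\,d\xi\right|.
$$
The heart of the matter is the weak-type bound
$$
\big|\{x\in\R:Cf(x)>\alpha\}\big|\le \frac{A}{\alpha^2}\,\|f\|_{L^2}^2\qquad(\alpha>0),
$$
for an absolute constant $A$. Granting this, the theorem follows by the familiar density argument: the claimed convergence holds trivially on the dense subclass of Schwartz functions, where the truncated integrals $S_nf$ converge uniformly, and the maximal inequality shows that the measure of the set where the oscillation $\limsup_n S_nf-\liminf_n S_nf$ exceeds any given level is dominated by $\|f-g\|_{L^2}$ with $g$ ranging over the dense class, hence is zero. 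Thus the entire difficulty is concentrated in the maximal estimate.

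To attack the maximal estimate I would first \emph{linearize} the supremum: replacing $n$ by an arbitrary measurable frequency cutoff $N(x)$, it suffices to bound the operator $f\mapsto S_{N(x)}f(x)$ uniformly over all choices of $N(\cdot)$. Writing the truncated Fourier integral as a modulation of the Hilbert transform, one checks that
$$
Cf(x)\sim\sup_{\xi\in\R}\left|\,\mathrm{p.v.}\!\int_\R \frac{e^{i\xi(x-y)}}{x-y}\,f(y)\,dy\right|,
$$
so that the problem becomes a weak-type bound for the \emph{maximal modulated Hilbert transform}. This is the form in which the operator reveals its essential feature: it is a supremum over all frequency modulations of a single Calder\'on--Zygmund operator, and it is the supremum that destroys the naive symmetry.

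The core estimate I would prove by time--frequency (phase-space) analysis, in the style of the Lacey--Thiele argument. One decomposes $f$ into wave packets adapted to \emph{tiles}, rectangles of unit area in the $(x,\xi)$-plane, and groups the tiles contributing to the linearized operator into \emph{trees}, collections on which the selected frequency $N(x)$ is essentially monotone and which therefore behave like a single Calder\'on--Zygmund piece. The argument then splits into two parts: a \emph{single-tree estimate}, bounding the contribution of one tree by its $L^2$-energy through an $L^2$/$BMO$ comparison; and a \emph{tree-selection and counting argument}, in which one organizes all relevant tiles into trees indexed by two parameters, their \emph{size} and their \emph{mass}, shows that trees of a given size are almost orthogonal, and sums the resulting geometric series in the size and mass parameters to recover the weak-type bound.

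The main obstacle, and the reason this theorem is so delicate, is the complete absence of $L^2$-orthogonality for the maximal operator: because of the supremum over $\xi$ (equivalently, the dependence of $N(x)$ on $x$), $Cf$ is not given by a Fourier multiplier and one cannot simply invoke Plancherel. The decisive idea is precisely the tree decomposition, which restores a usable form of almost-orthogonality between distinct frequency scales. Making the single-tree estimate and the tree-to-tree almost-orthogonality quantitative, controlling the overlap of the selected trees and summing over the size and mass parameters without losing the endpoint, is the technically heaviest step, and it is where essentially all of the difficulty of Carleson's theorem resides. In practice, for the purposes of this paper one simply invokes the classical theorem of Carleson and Hunt and transfers the periodic statement to $\R$ by a routine scaling and periodization argument; the sketch above indicates the structure on which such an invocation stands.
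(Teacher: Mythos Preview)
Your sketch is a faithful outline of the Lacey--Thiele approach to Carleson's theorem: linearize the maximal operator, rewrite it as a maximal modulated Hilbert transform, decompose into tiles and trees, and combine a single-tree estimate with a size/mass selection argument. As a high-level road map this is correct, though of course each of the steps you name (the single-tree bound, the almost-orthogonality of trees, the selection algorithm) is itself a substantial piece of work that you have not carried out.

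The paper, however, does not attempt to prove this theorem at all: it is stated as a tool and the proof is simply outsourced to Grafakos~\cite[Theorem~11.1.1]{Grafakos}. So there is no ``paper's proof'' to compare against; your own closing sentence --- that in practice one just invokes the classical Carleson--Hunt theorem --- is exactly what the paper does. For the purposes of this paper the citation is the appropriate level of detail; your sketch goes well beyond what is needed here.
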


For the proof, see the book of Garafakos \cite[Theorem~11.1.1]{Grafakos}.

Our next tool is Kolmogorov's Continuity Theorem. We state here the special case we require.

\begin{theorem}[Kolmogorov's Continuity Theorem]\label{thm:kolmogorov}
	Let $(\Omega,\mathcal{F},\mathbb{P})$ be a probability space, and let $X=\{X_t\}_{t\in T}$ be a stochastic process (i.e. $X_t\colon\Omega\to\R$ is measurable for every $t\in T$), where $T\subset\R$ is bounded, and suppose that there exists $p>0$, $C>0$ and $\alpha>1$ such that for all $a,b\in T$
	$$
	\mathbb{E}(|X_a-X_b|^p)\leq C|a-b|^\alpha.
	$$
	Then $X$ has a (H\"older) continuous modification. That is, there exists a stochastic process $Y=\{Y_t\}_{t\in T}$ such that for every $t\in T$, $\mathbb{P}(X_t=Y_t)=1$ and the event $$\{t\mapsto Y_t\text{ is H\"older-continuous with any exponent smaller than} \ (\alpha-1)/p\}$$ is measurable and has full measure.
\end{theorem}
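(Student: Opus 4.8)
I would prove this by running the classical dyadic chaining argument: turn the $L^p$ moment bound into summable tail estimates for increments between points at successive scales, use Borel--Cantelli to pass to a single full-measure event on which all these increments are controlled, and then telescope to produce a modulus of continuity. Since $T$ is bounded I would first rescale and translate so that $T\subseteq[0,1]$; this only alters the constant $C$, leaving $p$ and $\alpha$ unchanged. Fix $\gamma\in(0,(\alpha-1)/p)$. For each $n\ge0$ select a maximal $2^{-n}$-separated set $G_n\subseteq T$; by maximality $G_n$ is a $2^{-n}$-net of $T$, and packing in $[0,1]$ gives $\#G_n\le2^{n}+1$. For $t\in T$ let $\pi_n(t)\in G_n$ be a deterministically chosen nearest point, so that $|t-\pi_n(t)|\le2^{-n}$.

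The probabilistic input is Markov's inequality, which converts the hypothesis into
$$
\P\bigl(|X_a-X_b|>|a-b|^\gamma\bigr)\le\frac{\mathbb{E}|X_a-X_b|^p}{|a-b|^{\gamma p}}\le C\,|a-b|^{\alpha-\gamma p}
$$
for all $a,b\in T$. Let $A_n$ be the event that some pair $(u,v)$ with $u,v\in G_n\cup G_{n+1}$ and $|u-v|\le3\cdot2^{-n}$ fails $|X_u-X_v|\le|u-v|^\gamma$. Since $G_n$ and $G_{n+1}$ are separated, each point has only $O(1)$ partners within $3\cdot2^{-n}$, so there are $O(2^n)$ such pairs; a union bound then gives $\P(A_n)\le O(2^n)\cdot C\,(3\cdot2^{-n})^{\alpha-\gamma p}=O\bigl(2^{-(\alpha-1-\gamma p)n}\bigr)$. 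The choice $\gamma<(\alpha-1)/p$ makes this exponent positive, so $\sum_n\P(A_n)<\infty$ and Borel--Cantelli yields a full-measure event $\Omega_0$ together with a random $N$ such that $\omega\notin A_n$ for all $n\ge N$.

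I expect the chaining step to be the main obstacle --- not through any single hard estimate, but as the place where the per-scale bounds must be assembled into a global modulus. On $\Omega_0$ it proceeds as follows. For fixed $t$, each increment $|X_{\pi_{n+1}(t)}-X_{\pi_n(t)}|$ is controlled (the two points lie in $G_n\cup G_{n+1}$ at distance $\le2^{-n+1}$), hence is at most $(2^{-n+1})^\gamma$ for $n\ge N$; the telescoping series then converges and $Y_t:=\lim_n X_{\pi_n(t)}$ exists, defining $Y$ on all of $T$ (and $Y_t:=0$ off $\Omega_0$, preserving measurability as a pointwise limit of measurable maps). Given $s,t\in T$, I would pick $m\ge N$ with $2^{-(m+1)}<|s-t|\le2^{-m}$ and split $Y_s-Y_t$ into $(Y_s-X_{\pi_m(s)})+(X_{\pi_m(s)}-X_{\pi_m(t)})+(X_{\pi_m(t)}-Y_t)$. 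The two outer tails are each $O(2^{-m\gamma})$ by the telescoping bound, and the middle term is $\le|\pi_m(s)-\pi_m(t)|^\gamma\le(3\cdot2^{-m})^\gamma$ because $(\pi_m(s),\pi_m(t))$ is among the controlled pairs; hence $|Y_s-Y_t|=O(2^{-m\gamma})=O(|s-t|^\gamma)$, so $Y$ is $\gamma$-Hölder on $T$.

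It remains to identify $Y$ as a modification of $X$ and to upgrade the exponent. For fixed $t\in T$, the hypothesis gives $\mathbb{E}|X_{\pi_n(t)}-X_t|^p\le C|\pi_n(t)-t|^\alpha\to0$, so $X_{\pi_n(t)}\to X_t$ in probability and therefore almost surely along a subsequence; since $X_{\pi_n(t)}\to Y_t$ on $\Omega_0$, this forces $\P(X_t=Y_t)=1$. Running the construction for a sequence $\gamma_k\uparrow(\alpha-1)/p$ --- the nets $G_n$, and hence $Y$ itself, do not depend on $\gamma$ --- and intersecting the resulting full-measure events shows that on a single full-measure event $Y$ is Hölder of every order below $(\alpha-1)/p$. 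Finally, since $Y$ is continuous and determined by its values on the countable dense set $\bigcup_n G_n$, the relevant Hölder moduli are countable suprema of measurable quantities, so the asserted event is measurable; this completes the proof.
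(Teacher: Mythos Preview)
Your argument is correct and is the classical dyadic chaining proof of Kolmogorov's theorem. The paper itself does not prove this statement: immediately after stating it, the authors simply write that the proof can be found in Khoshnevisan's book (Theorems~2.3.1 and~2.5.1 there), so there is no in-paper argument to compare against. What you have written is essentially the standard proof one finds in such references.

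One minor point worth tightening: your H\"older bound $|Y_s-Y_t|=O(|s-t|^\gamma)$ is derived only when you can choose $m\ge N(\omega)$, i.e.\ for $|s-t|\le 2^{-N(\omega)}$. Extending to all $s,t\in T$ is routine---chain through the finite net $G_{N}$, or note that uniform local H\"older continuity on a bounded set upgrades to global H\"older continuity with a larger, $\omega$-dependent constant---but a sentence to that effect would close the last gap.
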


The proof of this version of Kolmogorov's Continuity Theorem can be found in Khoshnevisan's book \cite[Theorem~2.3.1 on page 158, Theorem~2.5.1 on page 166]{Khos}. Note that the formulation of Theorem 2.3.1 in the book has a typo, in the assumption ii) the term $r^{-1}$ is missing from the integral.

%Our final tool is Davies' Theorem.

%\begin{theorem}[Davies]\label{thm:davies}
%	Let $E$ be a Souslin set of $\R^d$ with $\mathcal{H}^s(E)=\infty$. Then there exists a compact set $F\subset E$ such that $\mathcal{H}^s(F)>0$ and for some constant $b>0$, $\mathcal{H}^s(B(x,r)\cap F)\leq br^s$ for every $x\in\R^d$ and $0<r\leq 1$.
%\end{theorem}

%See Falconer's book \cite[Theorem~5.6]{Falconerbook}.

Finally, we state a well-known theorem from the theory of random IFS.

\begin{proposition}\label{prop:L2}
Let $\mu$ be a measure satisfying \eqref{eq:assume}. Then almost surely $\nu_\uteta$ is absolutely continuous with respect to the Lebesgue measure with $L^2$ density.
\end{proposition}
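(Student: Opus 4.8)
The plan is to carry out the classical second-moment (Fourier energy) argument. First I would reduce the statement to an estimate on a Fourier transform: a finite Borel measure on $\R$ whose Fourier transform lies in $L^2(\R)$ is automatically absolutely continuous with density in $L^2$, which is a standard consequence of Plancherel's theorem (test the measure against Schwartz functions). Hence it is enough to show that $\widehat{\nu_\uteta}\in L^2(\R)$ almost surely, and for that it suffices, by Tonelli's theorem, to bound
$$
\mathbb{E}\left[\int_\R|\widehat{\nu_\uteta}(\xi)|^2\dd\xi\right]=\int_\R\iint_{\Omega\times\Omega}\mathbb{E}\left[e^{i\xi(\Pi_\uteta(\omega)-\Pi_\uteta(\tau))}\right]\dd\mu(\omega)\dd\mu(\tau)\dd\xi,
$$
where I used $\nu_\uteta=(\Pi_\uteta)_*\mu$ together with the identity $|\widehat{\nu_\uteta}(\xi)|^2=\iint e^{i\xi(\Pi_\uteta(\omega)-\Pi_\uteta(\tau))}\dd\mu(\omega)\dd\mu(\tau)$.

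The core of the proof is a conditioning estimate on the inner expectation for a fixed pair $\omega\neq\tau$. Set $n=|\omega\wedge\tau|$, so that $\omega^n=\tau^n$ while $\omega_n\neq\tau_n$. Inspecting the defining series $\Pi_\uteta(\omega)=\sum_j\lambda_{\omega^j}(t_{\omega_j}+\Theta_{\omega^j})$ one sees that the single random variable $\Theta_{\omega^{n+1}}$ enters $\Pi_\uteta(\omega)$ with coefficient $\lambda_{\omega^{n+1}}$, does not enter $\Pi_\uteta(\tau)$ at all, and is independent of every other $\Theta_{\omega^j}$ and $\Theta_{\tau^j}$ occurring in the two sums; note that the variables $\Theta_{\omega^0},\dots,\Theta_{\omega^n}$ attached to the common prefix cancel from the difference $\Pi_\uteta(\omega)-\Pi_\uteta(\tau)$. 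Conditioning on the $\sigma$-algebra generated by all the $\Theta$'s except $\Theta_{\omega^{n+1}}$ and applying \eqref{eq:assfourier} then gives
$$
\left|\mathbb{E}\left[e^{i\xi(\Pi_\uteta(\omega)-\Pi_\uteta(\tau))}\right]\right|\leq\left|\widehat{\Theta}(\lambda_{\omega^{n+1}}\,\xi)\right|.
$$

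Integrating this in $\xi$ gives $\int_\R|\widehat{\Theta}(\lambda_{\omega^{n+1}}\xi)|\dd\xi=\lambda_{\omega^{n+1}}^{-1}\,\|\widehat{\Theta}\|_{L^1(\R)}$, and $\|\widehat{\Theta}\|_{L^1}<\infty$ because $M\geq s'>1$ makes the bound \eqref{eq:assfourier} integrable. Since $\lambda_{\omega^{n+1}}\geq(\min_i\lambda_i)\,\lambda_{\omega^n}=(\min_i\lambda_i)\,d(\omega,\tau)$, the whole estimate comes down to the finiteness of the $1$-energy of $\mu$,
$$
\iint_{\Omega\times\Omega}\frac{\dd\mu(\omega)\dd\mu(\tau)}{d(\omega,\tau)}<\infty .
$$
This last point is a short computation from \eqref{eq:assume}: for a fixed $\omega$ and $r>0$ the set $\{\tau:d(\omega,\tau)<r\}$ is the cylinder $[\omega^k]$ with $k$ minimal such that $\lambda_{\omega^k}<r$, so $\mu\{\tau:d(\omega,\tau)<r\}\leq K\lambda_{\omega^k}^{s'}\leq Kr^{s'}$, and the layer-cake formula gives $\int_\Omega d(\omega,\tau)^{-1}\dd\mu(\tau)\leq\mu(\Omega)+K\int_1^\infty R^{-s'}\dd R<\infty$ uniformly in $\omega$ because $s'>1$. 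Chaining the three displayed bounds shows that $\mathbb{E}\int_\R|\widehat{\nu_\uteta}(\xi)|^2\dd\xi<\infty$, which proves the proposition.

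The argument is classical, so I do not anticipate a serious obstruction; the step requiring the most care is the conditioning estimate, where one must isolate exactly the right random variable. It is $\Theta_{\omega^{n+1}}$, sitting at the first vertex along the $\omega$-branch strictly below the splitting vertex $\omega\wedge\tau$: the variable $\Theta_{\omega^n}$ at the splitting vertex itself is shared with the $\tau$-branch and cancels in the difference, so it yields no Fourier decay and is useless.
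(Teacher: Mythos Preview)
Your argument is correct. The paper does not actually prove this proposition: it simply cites Jordan, Pollicott and Simon \cite[Proposition~4.4(b)]{JPS07} and remarks that the $L^2$ property is implicit there. The JPS proof is the density-based second-moment argument: one bounds $\mathbb{E}\!\int\bigl(\liminf_{r\to0}\nu_\uteta(B(x,r))/2r\bigr)^2\dd x$ via Fatou and the transversality estimate $\P(|\Pi_\uteta(\omega)-\Pi_\uteta(\tau)|<r)\leq Cr/d(\omega,\tau)$, which in turn uses only that $\Theta$ has bounded density. Your proof is the Fourier-side version of the same idea: you replace the transversality estimate by the bound $|\mathbb{E}[e^{i\xi(\Pi_\uteta(\omega)-\Pi_\uteta(\tau))}]|\leq|\widehat\Theta(\lambda_{\omega^{n+1}}\xi)|$ coming from the decay \eqref{eq:assfourier} (with $M>1$ so that $\widehat\Theta\in L^1$), and both routes reduce to the same $1$-energy bound on $\mu$. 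Your route has the minor advantage that it delivers the $L^2$ density directly via Plancherel rather than reading it off between the lines, and it is stylistically closer to the Fourier computations used later in Proposition~\ref{prop:main}.

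One small point worth making explicit: the diagonal $\{\omega=\tau\}$ has $\mu\times\mu$-measure zero because \eqref{eq:assume} forces $\mu$ to be nonatomic, so restricting to $\omega\neq\tau$ is harmless.
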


{By $L^2$ density, we mean here the $L^2$ space with respect to the Lebesgue measure.} The proof of the proposition can be found in Jordan, Pollicott and Simon \cite[Proposition~4.4(b)]{JPS07}, although it is not stated there that $\nu_\uteta$ has $L^2$ density, however, it is clear from the proof that indeed this is the case.

Finally we state a measurability lemma, which we need for the proof of the main theorem.

\begin{lemma}\label{lem:jointmeasurability}
	Let $(X,\mu)$ and $(Y,\nu)$ be compact separable metric spaces equipped with Borel measures. Assume a function $f:X\times Y\to\R$ satisfies
	\begin{itemize}
		\item for $\nu$-almost every $y\in Y$ $f(\cdot,y)$ is continuous,
		\item there exist a dense sequence $(x_i)_i\in Z\subset X$ such that for every $x_i$ $f(x_i,\cdot)$ is measurable.
	\end{itemize}
	Then $f$ is measurable.
\end{lemma}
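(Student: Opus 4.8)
The plan is to realise $f$ as an almost-everywhere pointwise limit of functions that are manifestly measurable with respect to the product structure, built out of the dense sequence $(x_i)_i$, and then to use that pointwise limits of measurable functions are measurable.

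First I would fix $n\in\N$ and use the sequence $(x_i)_i$, which is dense in $X$, to cut $X$ into disjoint Borel pieces of small diameter: set $B_i=B(x_i,1/n)$ and $P_i^{(n)}=B_i\setminus\bigcup_{j<i}B_j$. By density the sets $P_i^{(n)}$ form a countable Borel partition of $X$, and every point of $P_i^{(n)}$ lies within distance $1/n$ of $x_i$. Define $f_n\colon X\times Y\to\R$ by $f_n(x,y)=f(x_i,y)$ whenever $x\in P_i^{(n)}$. For any Borel set $A\subset\R$ one has $f_n^{-1}(A)=\bigcup_i P_i^{(n)}\times f(x_i,\cdot)^{-1}(A)$, a countable union of sets of the form (Borel subset of $X$)$\times$(measurable subset of $Y$), the latter by the second hypothesis. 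Hence $f_n$ is jointly measurable.

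Next I would fix a Borel set $Y_0\subseteq Y$ of full $\nu$-measure on which $y\mapsto f(\cdot,y)$ is continuous; such a set exists by the first hypothesis. Given $(x,y)\in X\times Y_0$ and $\eps>0$, continuity of $f(\cdot,y)$ at $x$ provides $\delta>0$ with $|f(x',y)-f(x,y)|<\eps$ whenever $d(x',x)<\delta$; for every $n>1/\delta$, the index $i$ with $x\in P_i^{(n)}$ satisfies $d(x_i,x)<1/n<\delta$, so $|f_n(x,y)-f(x,y)|<\eps$. Thus $f_n\to f$ pointwise on $X\times Y_0$, so $f$ restricted to $X\times Y_0$ is measurable. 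Since $X\times(Y\setminus Y_0)$ is a $\mu\times\nu$-null set, $f$ coincides almost everywhere with a measurable function and is therefore measurable with respect to the completion of $\mu\times\nu$, which is what is needed in the application.

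The argument uses only the separability of $X$ (to obtain the countable dense sequence and the countable covers) together with the stated hypotheses; the measure $\mu$ and the compactness of $X$ and $Y$ play no essential role beyond fixing the ambient $\sigma$-algebra. I do not expect a real obstacle here: the only points requiring care are bookkeeping ones — the pieces $P_i^{(n)}$ must be genuinely Borel, the almost-sure continuity set must be replaced by a Borel subset of full $\nu$-measure, and the final conclusion is measurability for the completed product measure rather than for the raw Borel product $\sigma$-algebra.
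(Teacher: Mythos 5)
Your proof is correct, but it takes a genuinely different route from the paper's. You build explicit approximants $f_n$ that are constant in $x$ on the Borel partition $P_i^{(n)}$ generated by the dense sequence; each $f_n$ is product-measurable because it only evaluates $f$ at the points $x_i$, where measurability in $y$ is assumed, and the a.e.-continuity in $x$ gives $f_n\to f$ pointwise on $X\times Y_0$ for a Borel set $Y_0$ of full $\nu$-measure, so $f$ is measurable on $X\times Y_0$ and hence measurable for the completion of $\mu\times\nu$, the complement $X\times(Y\setminus Y_0)$ being null. This is the classical Carath\'eodory-function argument adapted to the weaker hypotheses (measurability in $y$ only at the dense set of $x$-values). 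The paper argues instead at the level of preimages: for an open interval $I$ it invokes Luzin's theorem (in $y$, for each $x_i$) and Egorov's theorem (to obtain uniform moduli of continuity in $x$ on sets $G_{n_1}$) and constructs a measurable set $A'\subset A=f^{-1}(I)$ whose difference $A\setminus A'$ lies in a product-null set. Both arguments deliver exactly the same conclusion, namely measurability with respect to the completed product measure, which is all that is needed for the subsequent application of Fubini; yours is more elementary (no Luzin, no Egorov, no compactness, only separability of $X$), while the paper's is a heavier direct analysis of the preimage. The two care points you flag are the right ones: $Y_0$ must be the complement of a Borel null set containing the discontinuity set, and the conclusion is completion-measurability rather than raw Borel product measurability — the paper's own proof establishes no more than that either.
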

\begin{proof}
	By Luzin Theorem for every $x_i$ we can write $Y=\bigcup_{n\in N} H_n(x_i) \cup Y_1(x_i)$, where $f(x_i,\cdot)$ is continuous in every (measurable) $H_n(x_i)$ and $\nu(Y_1(x_i)=0$. By Egorov Theorem we can present $Y = \bigcup_{n\in \N} G_n \cup Y_0$, where $G_n$ is the measurable set of $y\in Y$ for which the function $f(\cdot,y)$ is continuous with some uniform modulus of continuity $\delta_n$, and $\nu(Y_0)=0$ (observe that by continuity we only need to check the modulus of continuity on $Z\times Y$, and there it is a measurable function of $y$, hence we can indeed use the Egorov Theorem).
	
	Let $I\subset \R$ be an open interval and consider the set $A=f^{-1}(I)$. For any point $(x_i,y)\in A\cap Z\times Y; y\in G_{n_1}\cap H_{n_2}(x_i)$ let $a(x_i,y,I)$ be the distance from $f(x_i,y)$ to the endpoints of $I$. For any $\alpha\in (0,1)$ we define
	\[
	B(x_i, y, I, \alpha, n_1, n_2) = B(x_i, a_1) \times (B(y, a_2)\cap G_{n_1} \cap H_{n_2}(x_i)),
	\]
	where $a_2=a_2(x_i, a, n_2, \alpha)$ is such that $|f(x_i,y)-f(x_i,z)|<\alpha a(x_i,y,I)$ for all $z\in B(y, a_2) \cap H_{n_2}(x_i)$ and $a_1=a_1(x_i, a, n_1, \alpha)$ is such that $|f(x_i,z)-f(w,z)|<(1-\alpha) a(x_i,y,I)$ for all $w\in B(x_i, a_1)$ and $z\in G_{n_1}$. We define
	\[
	A' = \bigcup_{n_1, n_2} \bigcup_{(x_i,y)\in A\cap Z\times Y; y\in G_{n_1}\cap H_{n_2}(x_i)} \bigcup_{\alpha\in (0,1)}  B(x_i, y, I, \alpha, n_1, n_2)
	\]
	and observe that $A'\subset A$.
	
	Note now that $A'$ is a countable union (over $n_1, n_2, x_i$) of intersections of an open set $\bigcup_\alpha B(x_i,a_1)\times B(y,a_2)$ with a measurable set $X\times (G_{n_1}\cap H_{n_2}(x_i))$. Thus, $A'$ is measurable. Observe also that taking $\alpha \searrow 0$ we see that the whole interval $(x_i-\delta_{n_1}(a), x_i+\delta_{n_1}(a)) \times \{y\}$ is contained in $A'$ -- what's important here is that the length of this interval does not depend on $n_2$.
	
	Let us now consider the set $A\setminus A'$. We will claim that
	\[
	A\setminus A' \subset X \times (Y_0 \cup \bigcup_{x_i\in Z} Y_1(x_i)).
	\]
	As this set has zero measure, the assertion would follow from this claim.
	
	Assume that the claim is not true, and let $(x,y)\in A\setminus A'$ be such that $y\in G_{n_1}$ and that for every $x_i\in Z$ $y\in H_{n_2(x_i)}(x_i)$. Let $a'$ be the distance from $f(x,y)$ to the endpoints of $I$. As $Z$ is dense in $X$, there exist a subsequence $x_{m_i}\to x$. As $y\in G_{n_1}$, $f(x_{m_i},y)\to f(x,y)$, in particular $a(x_{m_i},y)\to a'>0$. This means that from some moment on $a(x_{m_i},y)> a'/2$, hence the whole interval $(x_{m_i}-\delta_{n_1}(a'/2), x_{m_i}+\delta_{n_1}(a'/2)) \times \{y\}$ will be contained in $A'$. As this interval contains the point $(x,y)$ for $m_i$ large enough, we get a contradiction. This proves the claim, and hence the lemma follows.
\end{proof}

\section{Absolute continuity with H\"older density}

Let us denote the density function of $\nu_\uteta$ by $\vartheta_\uteta$. By \cite[Theorem~2.12]{Mattila_geometry}, for $\psi$-almost every $t$
\begin{equation}\label{eq:density}
\vartheta_\uteta(x)=\lim_{r\to0}\frac{\nu_\uteta(B(x,r))}{2r}\text{ for Lebesgue-almost every $x$.}
\end{equation}

\begin{lemma}
The function $\vartheta_\uteta(x)$ is a measurable function of $(x,\uteta)$. %\marginpar{measurability to be redone}
\end{lemma}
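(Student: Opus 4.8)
The plan is to apply the joint measurability lemma (Lemma~\ref{lem:jointmeasurability}) to the function $f(x,\uteta) = \vartheta_\uteta(x)$ on the product space $X \times \mathbb{T}$, where $X$ is a large compact interval containing $\bigcup_\uteta \Lambda_\uteta$ (this union is bounded since $\Theta$ is uniformly bounded and the $\lambda_i$ are contractions) and $\mathbb{T}$ is equipped with $\P$. One caveat: the density $\vartheta_\uteta(x)$ is only defined up to a Lebesgue-null set of $x$ for each $\uteta$, so I would first fix a canonical representative, for instance $\vartheta_\uteta(x) := \limsup_{r\to 0}\frac{\nu_\uteta(B(x,r))}{2r}$, which is defined for every $x$ and every $\uteta$, agrees with the density a.e.\ by \eqref{eq:density}, and — crucially — for $\P$-almost every $\uteta$ coincides Lebesgue-a.e.\ with an actual continuous (indeed Hölder) function once Theorem~\ref{thm:main} is available; but since that would be circular here, I instead only use that for $\P$-a.e.\ $\uteta$ the measure $\nu_\uteta$ is absolutely continuous with $L^2$ density by Proposition~\ref{prop:L2}. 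Actually, to be safe I would not invoke continuity in $x$ of the representative at all; instead I verify the two hypotheses of Lemma~\ref{lem:jointmeasurability} with the roles $X \leftrightarrow$ the $\uteta$-variable having the ``continuity'' property is not what we have. Let me restructure: I take the first coordinate of the lemma to be $\uteta \in \mathbb{T}$ and the second to be $x \in X$, so I need (i) for Lebesgue-a.e.\ $x$, $\uteta \mapsto \vartheta_\uteta(x)$ is continuous, and (ii) a dense sequence $\uteta^{(i)}$ for which $x \mapsto \vartheta_{\uteta^{(i)}}(x)$ is measurable. But (i) is false as stated since $\nu_\uteta$ need not depend continuously on $\uteta$ pointwise. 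So the right assignment is the opposite one, requiring continuity in $x$, which we lack unconditionally.

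Given this tension, the cleaner route is to bypass Lemma~\ref{lem:jointmeasurability} and argue directly. First I would show that $(x,\uteta) \mapsto \nu_\uteta(B(x,r))$ is jointly measurable for each fixed $r>0$: the map $\uteta \mapsto \Pi_\uteta(\omega) = \sum_j \lambda_{\omega^j}(t_{\omega_j}+\Theta_{\omega^j})$ is continuous (a uniformly convergent series of continuous functions of finitely many coordinates), hence $(\omega,\uteta)\mapsto \Pi_\uteta(\omega)$ is continuous, so $(\omega,x,\uteta) \mapsto \mathbf{1}_{B(x,r)}(\Pi_\uteta(\omega))$ is measurable (the indicator of a set that is, up to the boundary sphere, open), and integrating out $\omega$ against the fixed measure $\mu$ via Fubini gives joint measurability of $(x,\uteta)\mapsto \nu_\uteta(B(x,r)) = \int \mathbf{1}_{B(x,r)}(\Pi_\uteta(\omega))\,d\mu(\omega)$. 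Then $\vartheta_\uteta(x) = \limsup_{r\to 0}\frac{\nu_\uteta(B(x,r))}{2r}$; restricting $r$ to a sequence $r_n \to 0$ (which gives the same $\limsup$ by monotonicity/continuity of $r\mapsto \nu_\uteta(B(x,r))$ from the right, or more simply by noting $\nu_\uteta$ has no atoms a.s.\ so the ball measure is continuous in $r$), we exhibit $\vartheta_\uteta(x)$ as a countable $\limsup$ of jointly measurable functions, hence jointly measurable.

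The main obstacle is the one flagged above: reconciling the fact that the ``density'' is only an a.e.-equivalence class with the need for a genuine pointwise-defined jointly measurable function, and making sure the chosen representative is the one that feeds correctly into the subsequent stochastic-process argument (where $x$ is ``time''). I expect the resolution to be exactly the $\limsup$ device, together with the remark that by Proposition~\ref{prop:L2} and \eqref{eq:density} this representative equals the $L^2$ density for a.e.\ $x$, for a.e.\ $\uteta$, which is all that is needed downstream. A secondary, purely technical point is checking that $\mathbf{1}_{B(x,r)}(\Pi_\uteta(\omega))$ is measurable jointly in all variables despite the closed/open ambiguity on $\{|\Pi_\uteta(\omega)-x| = r\}$; this is handled by writing the open ball as an increasing union of smaller closed balls (or by approximating the indicator by continuous functions) so that the set in question is a countable combination of preimages of opens under the continuous map $(\omega,x,\uteta)\mapsto \Pi_\uteta(\omega) - x$. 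If one prefers to route through Lemma~\ref{lem:jointmeasurability} after all, one would apply it with $X = \mathbb{T}$, using that $\uteta \mapsto \nu_\uteta(B(x,r))$ is in fact continuous for each fixed $x$ with $\nu_\uteta(\partial B(x,r))=0$ and passing to the density via a dominated-convergence-type argument, but the direct Fubini approach is shorter and avoids the delicate continuity bookkeeping.
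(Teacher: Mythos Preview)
Your direct argument is correct: once you exhibit $(\omega,x,\uteta)\mapsto \mathbf{1}_{B(x,r)}(\Pi_\uteta(\omega))$ as Borel (which you handle cleanly via the continuity of $(\omega,\uteta)\mapsto\Pi_\uteta(\omega)$ and an approximation of the indicator), Tonelli gives joint measurability of $(x,\uteta)\mapsto\nu_\uteta(B(x,r))$, and passing to a countable $\limsup$ in $r$ yields a jointly measurable representative of $\vartheta_\uteta(x)$ that agrees with the density Lebesgue-a.e.\ for $\P$-a.e.\ $\uteta$ by \eqref{eq:density}. The long detour through Lemma~\ref{lem:jointmeasurability} at the start is unnecessary and, as you yourself discover, does not fit either orientation of the variables; you could simply delete that discussion.

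The paper's proof follows a genuinely different route: instead of proving that $(x,\uteta)\mapsto\nu_\uteta(B(x,r))$ is merely measurable, it introduces the radius-averaged quantity
\[
Z_r(x,\uteta)=\frac{1}{4r^3}\int_{r(1-r)}^{r(1+r)}\nu_\uteta(B(x,\ell))\,d\ell,
\]
which is actually \emph{continuous} in $(x,\uteta)$ (the averaging in $\ell$ kills the possible jumps of $\ell\mapsto\nu_\uteta(B(x,\ell))$, and weak-$*$ continuity of $\uteta\mapsto\nu_\uteta$ does the rest), and then sandwiches $Z_r$ between $(1\pm r)\cdot\nu_\uteta(B(x,r(1\pm r)))/(2r(1\pm r))$ to see that $\lim_{r\to 0}Z_r=\vartheta_\uteta$ wherever the density exists. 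Your Fubini approach is more elementary and avoids having to argue continuity, but the paper's smoothing device has the payoff that the very same continuous functions $Z_r$ are reused later (in the proof of Lemma~\ref{lem:timespace}, see \eqref{eq:thetameas}) to describe the convergence set $T_2$ as a Borel set. So while both arguments prove the lemma, the paper's choice is made with an eye toward that subsequent use.
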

\begin{proof}
We can write
\[
\vartheta_\uteta(x) = \lim_{r\searrow 0} Z_r(x,\uteta),
\]
where
\[
Z_r(x,\uteta) = \frac 1 {4r^3} \int_{r(1-r)}^{r(1+r)} \nu_\uteta(B(x,\ell)) d\ell
\]
is a continuous function of $(x,\uteta)$.

Indeed, the continuity follows from the definition of $\Pi_\uteta$. This map is even H\"older continuous if we equip $\mathbb{T}$ with the standard exponentially decreasing metric
\[
\rho(\uteta,\uteta') = \sum_{j=0}^\infty \sum_{\omega^j \in \{1,\ldots,N\}^j} \gamma^j |\uteta(\omega^j)-\uteta'(\omega^j)|
\]
for some $\gamma<1/N$. We then have
\begin{multline*}
(1-r) \cdot \frac {\nu_\uteta(B(x,r(1-r)))} {2r(1-r)}  =\frac {\nu_\uteta(B(x,r(1-r)))}{2r}\\
  \leq Z_r(x,\uteta) \leq \frac {\nu_\uteta(B(x,r(1+r)))} {2r} = \frac {\nu_\uteta(B(x,r(1+r)))} {2r(1+r)} \cdot (1+r).
\end{multline*}
\end{proof}

Let us denote the Fourier transform $\hat{\mu}$ of a Borel measure $\mu$ by
$$
\hat{\mu}(\xi)=\int e^{ix\xi}d\mu(x).
$$
Since by Proposition~\ref{prop:L2}, $\nu_\uteta\ll\mathcal{L}$ with $L^2$-density for $\P$-almost every $\uteta$, we get that $\hat{\nu}_\uteta=\hat{\vartheta_\uteta}\in L^2$. Then by Carleson's theorem (Theorem~\ref{thm:carleson}), for $\P$-almost every $\uteta$
\begin{equation} \label{eqn:carleson}
\vartheta_\uteta(x) = \lim_{n\to\infty} \frac 1 {2\pi} \int_{-n}^n e^{-ix\xi} \hat{\nu}_\uteta(\xi)d\xi\text{ for Lebesgue-almost every }x.
\end{equation}

Denote by $T$ the set of $(x,\uteta); x\in I, \uteta\in\mathbb{T}$ for which \eqref{eqn:carleson} holds.

\begin{lemma}\label{lem:timespace}
The set $T$ is Borel and has full (Lebesgue times $\P$) measure. In particular, there exists a measurable set $\mathcal{T}\subset I$ with full Lebesgue measure such that for every $x\in\mathcal{T}$, $T_x$ is measurable and has full $P$-measure. %\marginpar{measurability to check}
\end{lemma}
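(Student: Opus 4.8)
The plan is to establish the three assertions in turn. First, to see that $T$ is Borel, I would work with the truncated inverse transforms
\[
S_n(x,\uteta)=\frac{1}{2\pi}\int_{-n}^n e^{-ix\xi}\,\hat\nu_\uteta(\xi)\dd\xi,\qquad n\in\N,
\]
and check that each $S_n$ is a continuous function of $(x,\uteta)$. This comes down to the weak continuity of $\uteta\mapsto\nu_\uteta$, which is immediate from the (uniform) continuity of $\uteta\mapsto\Pi_\uteta$ already exploited in the previous lemma: it makes $(\xi,\uteta)\mapsto\hat\nu_\uteta(\xi)$ jointly continuous and bounded by $1$, so integrating the jointly continuous bounded integrand $e^{-ix\xi}\hat\nu_\uteta(\xi)$ over the compact interval $[-n,n]$ and using dominated convergence gives continuity of $S_n$ in $(x,\uteta)$. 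Combining this with the continuity of the functions $Z_r$ from the preceding lemma, the set where $\lim_n S_n(x,\uteta)$ exists and the set where $\lim_{r\searrow 0}Z_r(x,\uteta)$ exists are both Borel (one may let $r$ run over the rationals, since $r\mapsto Z_r(x,\uteta)$ is continuous), and on these sets the two limits are Borel functions of $(x,\uteta)$. Since \eqref{eqn:carleson} holds at $(x,\uteta)$ exactly when both limits exist and coincide — recall that $\vartheta_\uteta(x)=\lim_{r\searrow 0}Z_r(x,\uteta)$ wherever the density is defined — $T$ is the intersection of these Borel sets with the Borel set $\{\lim_n S_n=\lim_r Z_r\}$, hence Borel.

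Next, for the full-measure claim I would fix $\uteta$ outside a $\P$-null set so that Proposition~\ref{prop:L2} applies, giving $\nu_\uteta\ll\mathcal{L}$ with $L^2$ density. Then Carleson's theorem (Theorem~\ref{thm:carleson}) yields \eqref{eqn:carleson} for Lebesgue-a.e.\ $x$, while the Lebesgue density theorem gives $\vartheta_\uteta(x)=\lim_{r\searrow 0}Z_r(x,\uteta)$ for a.e.\ $x$, which is \eqref{eq:density}. Hence for $\P$-a.e.\ $\uteta$ the slice $T_\uteta=\{x\in I:(x,\uteta)\in T\}$ has full Lebesgue measure in $I$. Because $T$ is Borel and $\mathcal{L}|_I$ and $\P$ are finite measures on a nice product space (note $\mathbb{T}$ is compact metric, being a product of bounded intervals since $\Theta$ is bounded, and $\P$ is a Borel probability measure on it), Tonelli's theorem gives $(\mathcal{L}\times\P)(T)=\int_{\mathbb{T}}\mathcal{L}(T_\uteta)\dd\P(\uteta)=\mathcal{L}(I)$, so $T$ has full measure.

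Finally, for the ``in particular'' clause: since $T$ is Borel, every $x$-slice $T_x=\{\uteta:(x,\uteta)\in T\}$ is Borel, and Fubini's theorem makes $x\mapsto\P(T_x)$ measurable with $\int_I\P(T_x)\dd\mathcal{L}(x)=(\mathcal{L}\times\P)(T)=\mathcal{L}(I)$; since $0\le\P(T_x)\le 1$, this forces $\P(T_x)=1$ for Lebesgue-a.e.\ $x\in I$, and one sets $\mathcal{T}=\{x\in I:\P(T_x)=1\}$. I do not anticipate a genuine obstacle in this lemma; the only points that require care are the joint-continuity statements that legitimise the Borel bookkeeping (in particular the weak continuity of $\uteta\mapsto\nu_\uteta$) and the verification that $(\mathbb{T},\P)$ is a standard finite measure space so that Tonelli/Fubini may be invoked — both routine given the compactness of $\mathbb{T}$ and the continuity of $\Pi_\uteta$ in $\uteta$.
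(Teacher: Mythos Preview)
Your proposal is correct and follows essentially the same approach as the paper: you establish Borel measurability of $T$ via the joint continuity of the truncated inverse transforms $S_n$ (the paper's $f(n,x,\uteta)$) and of the $Z_r$, then obtain full product measure from Proposition~\ref{prop:L2} together with Carleson and the Lebesgue density theorem, and finally deduce the existence of $\mathcal{T}$ by Fubini on the $x$-slices. The paper writes out the Borel decomposition $T=T_1\cap T_2\cap T_3$ explicitly as countable unions/intersections over rationals, but the content is the same as your argument.
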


\begin{proof}
It is enough to check the measurability of $T$ and that $T$ has full $\mathcal{L}|_I\times\P$-measure. Then by Fubini's theorem (see for example Bogachev's book \cite[Theorem~3.4.1]{Bogachev}), for $\mathcal{L}|_I$-almost every $x$ the set $T_x=\{\uteta\in \mathbb{T}:(x,\uteta)\in T\}$ is measurable and the map $x\mapsto\P(T_x)$ is also measurable. We can then define $\mathcal{T}=\{x\in I:T_x\text{ is measurable and }P(T_x)=1\}$.

Denote the integral on the right hand side of \eqref{eqn:carleson} by
$$
f(n,x,\uteta)=\frac 1 {2\pi} \int_{-n}^n e^{-ix\xi} \hat{\nu}_\uteta(\xi)d\xi.
$$
Observe for future reference that $f(n,x,\uteta)\in \R$ for all $n,x\in\mathbb{R}$ and $\uteta\in \mathbb{T}$. Since $(x,\uteta)\mapsto f(n,x,\uteta)$ is clearly continuous, the set $\left\{(x,\uteta)\in I\times \mathbb{T}:|f(n_1,x,\uteta)-f(n_2,x,\uteta)|<1/N\right\}$ is clearly a Borel set for every $n_1,n_2\in\mathbb{Q}$. Then let
\[
T_1 = \bigcap_{N=1}^\infty \bigcup_{m=1}^\infty\bigcap_{\substack{n_1,n_2\in\mathbb{Q}\\ n_1,n_2>m}}\left\{(x,t)\in I\times \mathbb{T}:|f(n_1,x,\uteta)-f(n_2,x,\uteta)|<1/N\right\}
\]
be the set for which the right-hand side of \eqref{eqn:carleson} converges pointwise, which is clearly Borel. Similarly, let us define the set for which the right-hand side of \eqref{eq:density} converges. That is,
\begin{equation}\label{eq:thetameas}
T_2=\bigcap_{N=1}^\infty \bigcup_{m=1}^\infty\bigcap_{\substack{n_1,n_2\in\mathbb{Q}\\ 0<n_1,n_2<1/m}}\left\{(x,\uteta)\in I\times \mathbb{T}:\left|Z_{n_1}(x,\uteta)-Z_{n_2}(x,\uteta)\right|<1/N\right\}
\end{equation}
which is again clearly Borel. Finally, let $T_3$ be the set when the difference converges to zero, i.e.
$$
T_3=\bigcap_{N=1}^\infty \bigcup_{m=1}^\infty\bigcap_{\substack{n_1,n_2\in\mathbb{Q}\\ 0<n_1<1/m, m<n_2}}\left\{(x,\uteta)\in I\times \mathbb{T}:\left|Z_{n_1}(x,\uteta)-f(n_2,x,\uteta)\right|<1/N\right\}.
$$
Since $T=T_1\cap T_2\cap T_3$ is exactly the set of points for which \eqref{eqn:carleson} holds, it is measurable and by combining Fubini's, Marstrand's and Carleson's Theorem, it has full $\mathcal{L}|_I\times\P$-measure.
\end{proof}

Our goal is to show that there is a H\"older-continuous variant of $\vartheta_\uteta(x)$ over $\mathcal{T}$.

\begin{proposition}\label{prop:main}
There exists a function $(\uteta,x)\mapsto g_\uteta(x)$ such that
\begin{itemize}
\item the map $\uteta\in\mathbb{T}\mapsto g_\uteta(x)$ is measurable for every $x\in\mathcal{T}$,
\item set of $\uteta$ such that the event $\{x\in \mathcal{T}\mapsto g_\uteta(x)\text{ is H\"older-continuous}\}$ is measurable and has full $\P$-measure,
\item for every $x\in\mathcal{T}$, $\vartheta_\uteta(x)=g_\uteta(x)$ for $\P$-almost every $\uteta$.
\end{itemize}
\end{proposition}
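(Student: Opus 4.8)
The plan is to treat $x\mapsto\vartheta_\uteta(x)$, for $x$ ranging over the full-measure set $\mathcal{T}\subset I$, as a stochastic process, to bound its increments via the inversion formula \eqref{eqn:carleson}, and to invoke Kolmogorov's Continuity Theorem (Theorem~\ref{thm:kolmogorov}); the function $g_\uteta$ will be the H\"older-continuous modification it produces. First I would fix the process: for $x\in\mathcal{T}$ set $X_x(\uteta):=\vartheta_\uteta(x)$ for $\uteta\in T_x$ and $X_x(\uteta):=0$ otherwise. Joint measurability of $(x,\uteta)\mapsto\vartheta_\uteta(x)$ together with Lemma~\ref{lem:timespace} shows each $X_x$ is a random variable, and by the definition of $T$ we have, for every $x\in\mathcal{T}$ and $\P$-almost every $\uteta$,
\[
X_x(\uteta)=\frac1{2\pi}\lim_{n\to\infty}\int_{-n}^n e^{-ix\xi}\,\hat\nu_\uteta(\xi)\,d\xi\in\R .
\]

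Next I would estimate the increments. Fix a small exponent $\beta\in(0,1)$ and an even integer $p=2m$ (to be chosen at the very end). Let $a,b\in\mathcal{T}$. On the full-measure set $T_a\cap T_b$ the difference $X_a-X_b$ equals $\frac1{2\pi}\lim_n\int_{-n}^n(e^{-ia\xi}-e^{-ib\xi})\hat\nu_\uteta(\xi)\,d\xi\in\R$. Taking $2m$-th powers (legitimate since the truncated integrals are real), expanding each $\mathbb E\big[(\tfrac1{2\pi}\int_{-n}^n(\cdots))^{2m}\big]$ by Fubini on the bounded frequency box $[-n,n]^{2m}$ (the integrand is bounded, since $|\hat\nu_\uteta|\le1$), and then letting $n\to\infty$ by Fatou, one gets
\[
\mathbb E\big(|X_a-X_b|^{2m}\big)\ \le\ \frac1{(2\pi)^{2m}}\int_{\R^{2m}}\ \prod_{r=1}^{2m}\big|e^{-ia\xi_r}-e^{-ib\xi_r}\big|\ \Big|\mathbb E\Big(\prod_{r=1}^{2m}\hat\nu_\uteta(\xi_r)\Big)\Big|\ d\xi .
\]
Using $|e^{-ia\xi}-e^{-ib\xi}|\le2\,(|a-b||\xi|)^\beta$, this is $\le C_{m,\beta}\,|a-b|^{2m\beta}\,\mathcal I_m(\beta)$ with
\[
\mathcal I_m(\beta):=\int_{\R^{2m}}\ \prod_{r=1}^{2m}|\xi_r|^{\beta}\ \Big|\mathbb E\Big(\prod_{r=1}^{2m}\hat\nu_\uteta(\xi_r)\Big)\Big|\ d\xi .
\]

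The main work — and the step I expect to be the real obstacle — is to prove that there is $\beta_0>0$, depending only on the IFS, on $M$ and on the constant $K$ in \eqref{eq:assume} but \emph{not} on $m$, such that $\mathcal I_m(\beta)<\infty$ for every $\beta\in(0,\beta_0)$ and every $m\ge1$. To evaluate the expectation I would write $\mathbb E(\prod_r\hat\nu_\uteta(\xi_r))=\int_{\Omega^{2m}}\mathbb E\big(e^{\,i\sum_r\xi_r\Pi_\uteta(\omega_r)}\big)\,d\mu^{\otimes2m}(\omega_1,\dots,\omega_{2m})$. Collecting for each vertex $w\in\Omega_*$ the occurrences of the independent variable $\Theta_w$, the coefficient of $\Theta_w$ in $\sum_r\xi_r\Pi_\uteta(\omega_r)$ equals $\lambda_w L_w(\xi)$ with $L_w(\xi):=\sum_{r:\,w\preceq\omega_r}\xi_r$, so that
\[
\Big|\mathbb E\big(e^{\,i\sum_r\xi_r\Pi_\uteta(\omega_r)}\big)\Big|=\prod_{w\in\Omega_*}\big|\widehat\Theta(\lambda_w L_w(\xi))\big|\le\prod_{w\in\Omega_*}\min\!\Big(1,\frac{C}{(1+\lambda_w|L_w(\xi)|)^{M}}\Big),
\]
and by Tonelli $\mathcal I_m(\beta)$ is at most $\int_{\Omega^{2m}}\big(\int_{\R^{2m}}\prod_r|\xi_r|^\beta\prod_w\min(1,C(1+\lambda_w|L_w(\xi)|)^{-M})\,d\xi\big)\,d\mu^{\otimes2m}$. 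For $\mu^{\otimes2m}$-almost every tuple the $\omega_r$ are pairwise distinct ($\mu$ is non-atomic by \eqref{eq:assume}), and then the linear forms attached to the branch vertices of the finite tree spanned by $\omega_1,\dots,\omega_{2m}$ span $\R^{2m}$; changing variables successively along this tree — with $M$ taken large so that each one-dimensional factor $\int|\eta|^{\beta'}(1+\lambda_w|\eta|)^{-M}\,d\eta\lesssim\lambda_w^{-1-\beta'}$ is finite — bounds the inner integral by a product over the branch vertices. Integrating over $\mu^{\otimes2m}$ rewrites $\mathcal I_m(\beta)$ as a sum over the possible shapes of the tree spanned by the $\omega_r$, weighted by products of the measures of the branches; the hypothesis $\mu([\omega])\le K\lambda_\omega^{s'}$ with $s'>1$, together with $\sum_i\lambda_i^{s}=1$ (so $s\ge s'>1$) and $M$ large, is exactly what forces this sum to converge for $\beta<\beta_0$. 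The delicate point inside this estimate is to control, for near-diagonal tuples, the large multiplicative constants by the corresponding small $\mu$-mass using the Fourier decay along the branches; it is also here, through Carleson's Theorem behind \eqref{eqn:carleson} and the inverse Fourier transform on the line, that the argument is tied to dimension one.

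Finally I would assemble everything. Fix $\beta\in(0,\beta_0)$ and pick $m>1/(2\beta)$. The previous two steps give $\mathbb E(|X_a-X_b|^{2m})\le C|a-b|^{2m\beta}$ for all $a,b\in\mathcal{T}$ with $\alpha:=2m\beta>1$, so Theorem~\ref{thm:kolmogorov} (applied on the bounded index set $T=\mathcal{T}$, with $p=2m$) yields a modification $Y=\{Y_x\}_{x\in\mathcal{T}}$: each $Y_x$ is measurable, $\P(X_x=Y_x)=1$ for every $x\in\mathcal{T}$, and the event $\{x\mapsto Y_x$ is H\"older continuous$\}$ is measurable and of full $\P$-measure. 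Setting $g_\uteta(x):=Y_x(\uteta)$ gives the first two bullet points verbatim, and the third one follows because for $x\in\mathcal{T}$ we have $Y_x=X_x$ $\P$-a.s.\ and $X_x=\vartheta_\uteta(x)$ on $T_x$, which has full $\P$-measure.
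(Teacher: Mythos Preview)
Your approach is essentially identical to the paper's: the same Fatou/Fubini manipulation of the truncated inverse Fourier integrals to reach the moment bound, the same factorization of $\mathbb E\big(\prod_r\hat\nu_\uteta(\xi_r)\big)$ as a product of $\widehat\Theta$'s over the tree of common prefixes, and the same use of \eqref{eq:assume} to sum over the possible tree shapes before applying Kolmogorov's Continuity Theorem. The paper carries out your ``main work'' explicitly by introducing separation times $N_k$ (the first level at which $\omega^{(k)}$ splits off from all of $\omega^{(1)},\dots,\omega^{(k-1)}$), observing that the corresponding linear forms $\sum_{\ell\in N(\omega^{(k),N_k})}\xi_\ell$ are triangular (since $1,\dots,k-1\notin N(\omega^{(k),N_k})$) and hence give a change of variables with Jacobian bounded by a constant depending only on $p$, and then bounding $\sum_Q\mu^p(Q)\prod_k\lambda_{\omega^{(k),N_k}}^{-1-\gamma}$ via the conditional estimate $\P(\lambda_{\omega^{(\ell),N_\ell}}<a\mid\omega^{(1)},\dots,\omega^{(\ell-1)})<(\ell-1)Ka^{s'}$; one small correction to your write-up is that $M$ is a fixed datum of $\Theta$ (with $M\ge s'$), not a parameter to be ``taken large'' --- what one actually chooses is $\beta<s'-1\le M-1$, which makes each one-dimensional factor $\int|\eta|^{\beta}(1+\lambda_w|\eta|)^{-M}\,d\eta$ finite.
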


\begin{proof}
First, let us fix constants $\gamma>0$ and $\alpha>1$, to be defined later. We also fix an even number $p>0$.
%such that the $(1+2\gamma)$-energy of $\nu$ is finite, that is,
%\begin{equation}\label{eq:energy}
%  \mathcal{E}_\nu(1+2\gamma)=\iint\frac{1}{\|x-y\|^{1+2\gamma}}d\nu(x)d\nu(y)<\infty.
%\end{equation}

We want to use the Kolmogorov's Continuity Theorem (Theorem~\ref{thm:kolmogorov}). What we need is to check that for $\alpha>1$ and some constant $C>0$ (to be precised later) we have for every $a,b\in\mathcal{T}$,
\begin{equation}\label{eq:forcarl}
Z(a,b) :=\int (\vartheta_\uteta(a)-\vartheta_\uteta(b))^p dP(\uteta) \leq C \cdot |a-b|^{\alpha}.
\end{equation}

Let us calculate it,  using the notation $f(n,a,\uteta)$ defined above.
\[
Z(a,b) = \frac 1 {(2\pi)^p}\int \lim_{n\to\infty} (f(n,a,\uteta)-f(n,b,\uteta))^p d\P(\uteta).
\]
Applying the Fatou's Lemma (which we can do because $f(n,a,\uteta) \in \R$), we get that
\[
\begin{split}
Z(a,b)&\leq \liminf_{n\to\infty} \frac 1 {(2\pi)^p} \int (f(n,a,\uteta)-f(n,b,\uteta))^p d\P(\uteta)\\
&= \frac 1 {(2\pi)^p} \liminf_{n\to\infty} \int \int_{-n}^n \ldots \int_{-n}^n \prod_{k=1}^p (e^{-ia\xi_k}-e^{-ib\xi_k}) \hat{\nu}_\uteta(\xi_k)   d\xi_1 \ldots d\xi_p d\P(\uteta).\\
\end{split}
\]
Since the integrals are of finite support and the function $\prod_k (e^{-ia\xi_k}-e^{-ib\xi_k}) \hat{\nu}_t(\xi_k)$ is uniformly bounded, we may apply Fubini's theorem and we get that
\[\begin{split}
Z(a,b) &\leq\frac 1 {(2\pi)^p} \liminf_{n\to\infty}  \left|\int_{-n}^n \ldots \int_{-n}^n \prod_{k=1}^p (e^{-ia\xi_k}-e^{-ib\xi_k}) \int \prod_{k=1}^p \hat{\nu}_\uteta(\xi_k) d\P(\uteta) d\xi_1 \ldots d\xi_p\right|\\
&\leq \frac 1 {(2\pi)^p} \liminf_{n\to\infty} \int_{-n}^n \ldots \int_{-n}^n \prod_{k=1}^p |e^{-ia\xi_k}-e^{-ib\xi_k}| \cdot \left|\int \prod_{k=1}^p \hat{\nu}_\uteta(\xi_k) d\P(\uteta) \right|d\xi_1 \ldots d\xi_p\\
&=\frac 1 {(2\pi)^p}  \int_{-\infty}^\infty \ldots \int_{-\infty}^\infty \prod_{k=1}^p |e^{-ia\xi_k}-e^{-ib\xi_k}| \cdot \left|\int \prod_{k=1}^p \hat{\nu}_\uteta(\xi_k) d\P(\uteta) \right|d\xi_1 \ldots d\xi_p.
\end{split}
\]

Using estimation $|e^{-i\eta}-1| \leq C_\gamma |\eta|^\gamma$, we get
\begin{equation}\label{eq:Z1}
Z(a,b) \leq C |a-b|^{p\gamma} \int_{-\infty}^\infty \dots \int_{-\infty}^\infty \prod_{k=1}^p|\xi_k|^\gamma \cdot W(\xi_1,\ldots,\xi_p) d\xi_1 \ldots d\xi_p,
\end{equation}
where
\[
W(\xi_1,\ldots,\xi_p) = \left|\int \prod_{k=1}^p \hat{\nu}_\uteta(\xi_k) d\P(\uteta)\right|.
\]

Next we will estimate $W$. Substituting the definition of the Fourier transform and the formula for $\Pi_\uteta$, we get

\[\begin{split}
W(\xi_1,\ldots,\xi_p)&=\left|\iiint \prod_{k=1}^p e^{i\xi_k\Pi_\uteta(\omega^{(k)})}d\mu(\omega^{(1)}) \ldots d\mu(\omega^{(p)}) d\P(\uteta)\right|\\
&=\left|\iiint \prod_{k=1}^p e^{i\xi_1 \sum_{n=0}^\infty \lambda_{\omega^{(k),n}} (t_{\omega^{(k),n}}+\Theta_{\omega^{(k),n}})} d\P(\uteta) d\mu(\omega^{(1)})\ldots d\mu(\omega^{(p)})\right|\\
&=\left|\iiint \prod_{k=1}^p \prod_{n=0}^\infty e^{i\xi_k \lambda_{\omega^{(k),n}} \Theta_{\omega^{(k),n}}} d\P(\uteta) d\mu(\omega^{(1)})\ldots d\mu(\omega^{(p)})\right|.
\end{split}\]

 As every $\Theta_{\omega^{(k),n}}$ has the same distribution $\Theta$, we can write
\[
W(\omega^{(1)},\ldots,\omega^{(p)})(\xi_1,\ldots,\xi_p) := \prod_{n=0}^\infty \prod_{\tau^n} \hat{\Theta}(\lambda_{\tau^n} \cdot \sum_{k\in N_{\omega^{(1)},\ldots,\omega^{(p)}}(\tau^n)} \xi_k)
\]
where $N_{\omega^{(1)},\ldots,\omega^{(p)}}(\tau^n)$ is the set of $k$'s for which $\omega^{(k),n}=\tau^n$. We have thus

\[
W(\xi_1,\ldots,\xi_p) = \int W(\omega^{(1)},\ldots,\omega^{(p)})(\xi_1,\ldots,\xi_p)d\mu^p(\omega^{(1)},\ldots,\omega^{(p)}).
\]

As we have $p$ sequences $\omega^{(k)}$, we also have, for every $n$, exactly $p$ words $\omega^{(k),n}$ (some of which can coincide). Thus, for any $n$, in all the sets $N_{\omega^{(1)},\ldots,\omega^{(p)}}(\tau^n); \tau^n\in \Omega_n$ every symbol $k_\ell \in \{1,\ldots,p\}$ appears exactly once. Moreover, $\mu^p$-almost surely there exists $N_0$ such that for $n\geq N_0$ all the words $\omega^{(k),n}$ are different from all words $\omega^{(k'),n}, k'\neq k$.

We define a sequence $N_1,\ldots, N_p$ inductively. We take $N_1=0$, and then for every $k=2,\ldots,p$ we define $N_k$ as the smallest natural number such that $\omega^{(k),N_k}$ is different from every $\omega^{(k'),N_k}, k'<k$. We can write
\[
|W(\xi_1,\ldots, \xi_p)| \leq \sum_{j_1,\ldots, j_p} \mu^p(N_k=j_k \forall k) \cdot \prod_{k=1}^p \left|\hat{\Theta}\left(\lambda_{\omega^{(k),j_k}}\cdot\sum_{\ell\in N_{\omega^{(1)},\ldots,\omega^{(p)}}(\omega^{(k),j_k})}\xi_\ell\right)\right|.
\]

Observe that the sets $N(\omega^{(k),N_k})$ are all different and that the corresponding vectors $\sum_{\ell\in N(\omega^{(k),N_k})} e_\ell$ form a coordinate system in $\R^p$ with Jacobian bounded from below by a constant depending only on $p$ -- both those properties follow from $1,\ldots, k-1 \notin N(\omega^{(k),N_k})$. Thus, for a given sequence $N_1,\ldots, N_p$ we have

\[
\begin{split}
&\iiint \prod_{k=1}^p |\xi_k|^\gamma |W(\omega^{(1)},\ldots,\omega^{(p)})(\xi_1,\ldots, \xi_p)| d\xi_1\ldots d\xi_p \\
&\qquad\leq C \cdot \iiint \prod_{k=1}^p |\xi_k|^\gamma \frac 1 {(1+\lambda_{\omega^{(k),N_k}} \cdot \sum_{\ell\in N_{\omega^{(1)},\ldots,\omega^{(p)}}(\omega^{(k),N_k})} \xi_\ell)^M} d\xi_1\ldots d\xi_p\\
&\qquad = C' \cdot \prod_{k=2}^p \lambda^{-1-\gamma}_{\omega^{(k),N_k}},
\end{split}
\]
where we use the assumption
\[
|\hat{\Theta}(x)| < \frac C {(1+|x|)^M}
\]
and we also assume $M=s'>1+\gamma$.

Coming back to $Z(a,b)$, we get
\begin{equation} \label{eqn:rozne}
Z(a,b) \leq  C |a-b|^{p\gamma} \cdot \sum_Q \mu^p(Q) \prod_{k=1}^p \lambda^{-1-\gamma}_{\omega^{(k),N_k}},
\end{equation}
where $Q$ is the collection $(N_1, \omega^{(1),N_1}, \ldots, N_p, \omega^{(p),N_p})$.

The last step is to estimate the sum on the right hand side of \eqref{eqn:rozne}. We denote
\[
W_\ell= \sum_Q \mu^p(Q) \prod_{k=1}^\ell \lambda^{-1-\gamma}_{\omega^{(k),N_k}}
\]
and estimate it inductively. For $\ell=1$ we have $N_1=0$ and $W_1=1$. For $\ell=2$ we have
\[
\P(\lambda_{\omega^{(2),N_2}}<a_2| \omega^{(1)}) < K a_2^{s'}.
\]
Indeed, the event $\lambda_{\omega^{(2),N_2}}<a$ means that the diameter of the minimal cylinder $[\omega^{(1)}\wedge \omega^{(2)}]$ containing both $\omega^{(1)}$ and $\omega^{(2)}$ is smaller than $a$. In other words, there is the first cylinder $C_k(\omega^{(1)})$ of diameter smaller than $a$, and $\omega^{(2)}$ must belong to it. By our assumption about $\mu$, the probability $\mu$ that $\omega^{(2)}$ belongs to this cylinder is smaller than $Ka^{s'}$. Note that this estimation does not depend on $\omega^{(1)}$.

For general $\ell$ there is not one such cylinder but $\ell-1$, one around each $\omega^{(i)}, i<\ell$. Thus,
\[
\P(\lambda_{\omega^{(\ell),N_\ell}}<a_{\ell}| \omega^{(1)},\ldots, \omega^{(\ell-1)}) < (\ell-1)Ka_{\ell}^{s'},
\]
again independently of $\omega^{(1)},\ldots, \omega^{(\ell-1)}$, and as a result
\begin{equation} \label{eqn:calcula}
\P(\prod_{k=1}^p \lambda_{\omega^{(k),N_k}}<a) < \sum_{a_2\cdot \ldots \cdot a_p <a} (p-1)! \cdot K^{\ell-1} \cdot (a_2\cdot \ldots \cdot a_{p})^{s'}.
\end{equation}
Possible values of $a_i$ differ from each other by a factor at least $\max_j \lambda_j<1$, thus the number of collections $(a_2,\ldots, a_\ell)$ such that
\[
a > a_2\cdot\ldots\cdot a_\ell \geq a \max_j \lambda_j
\]
is of order at most $(\log a/\max_j \log \lambda_j)^{p-1}$. Thus, if $1+\gamma < s'$ then
\[
\begin{split}
W_p &< \sum_{m=1}^{\infty} (\max_j \lambda_j)^{m(-1-\gamma)} \P((\max_j \lambda_j)^m<\prod_{k=1}^\ell \lambda_{\omega^{(k),N_k}}\\
 &<(\max_j \lambda_j)^{m-1}) < \sum_{m=1}^{\infty} (\max_j \lambda_j)^{m(s'-1-\gamma)} O(m^{p-1}) <\infty.
 \end{split}
\]

We can now do the final choice of constants. We have $s'>1$, we choose $\gamma<s'-1$, and then we choose some even $p$ large enough that $p\gamma>1$. We are done.
\end{proof}

\begin{proof}[Proof of Theorem~\ref{thm:main}]
	
	Let $(x,\uteta)\mapsto g_\uteta(x)$ be te function defined in Proposition~\ref{prop:main}. Then by Lemma~\ref{lem:jointmeasurability}, $(x,\uteta)\mapsto g_\uteta(x)$ is measurable with respect to $\mathcal{L}|_I\times\P$. Hence, by the third observation of Proposition~\ref{prop:main} and Fubini's theorem, for $\P$-almost every $\uteta$
	$$
	\vartheta_{\uteta}(x)=g_{\uteta}(x)\text{ for $\mathcal{L}$-almost every }x\in\mathcal{T}.
	$$
	Since $\mathcal{T}$ has full Lebesgue measure and in particular, is dense in $I$, by redefining $\vartheta_{\uteta}(x)$ on an at most zero measure set, it is easy to see that for $\P$-almost every $\uteta$
	$$
	d\nu_{\uteta}(x)=g_\uteta(x)d\mathcal{L}(x),
	$$
	which completes the proof.
\end{proof}

\begin{proof}[Proof of Corollary~\ref{cor}]
	By Theorem~\ref{thm:main} applied for the Bernoulli measure $\mu$ with probability vector $(\lambda_1^s,\ldots,\lambda_N^s)$, where $s$ is defined in \eqref{eq:simdim}, $\nu_\uteta=(\Pi_\uteta)_*\mu$ is absolutely continuous with H\"older-continuous density $\P$-almost surely. Since $\nu_\uteta$ is a probability measure, there has to be a point $x\in\Lambda_\uteta$ for which the density $\vartheta_\uteta(x)>0$. Using the continuity of $\vartheta_\uteta(x)$, the support of $\nu_{\uteta}$ (which is $\Lambda_\uteta$) must contain an interior point.
\end{proof}

\bibliographystyle{abbrv}
\bibliography{selfsimilar_bib}

\end{document}